\tikzstyle{normal}=[circle,inner sep=0pt, fill=black,  minimum size=1.5mm, draw]
\newtheorem{theorem}{Theorem}[section]
\newtheorem{lemma}[theorem]{Lemma}
\newtheorem{proposition}[theorem]{Proposition}
\theoremstyle{definition}
\newtheorem{example}[theorem]{Example}
\theoremstyle{remark}
\numberwithin{equation}{section}
\DeclareMathOperator{\supp}{supp}
\def\ab{{\mathbf a}}
\def\eb{{\mathbf e}}
\newcommand{\Fc}{\mathcal{F}}
\newcommand{\Pc}{\mathcal{P}}
\newcommand{\Hc}{\mathcal{H}}
\newcommand{\set}[1]{\left\{\,#1\,\right\}} 
\newcommand{\with}{\ \vrule\ }  
\newcommand{\NN}{\mathbb N}
\newcommand{\RR}{\mathbb R}
\newcommand{\ZZ}{\mathbb Z}
\begin{document}
%%%%%%%%%%%%%%%%%%%%%%%%%%%%%%%%%%%%%%%%
%% title
%%%%%%%%%%%%%%%%%%%%%%%%%%%%%%%%%%%%%%%%
\title{Edge rings satisfying Serre's condition $(R_{1})$}
%%%%%%%%%%%%%%%%%%%%%%%%%%%%%%%%%%%%%%%%
%%%%%%%%%%%%%%%%%%%%%%%%%%%%%%%%%%%%%%%%
%% Information for authors
%%%%%%%%%%%%%%%%%%%%%%%%%%%%%%%%%%%%%%%%
\author[]{Takayuki Hibi}
\address[]{Department of Pure and Applied Mathematics,
Graduate School of Information Science and Technology,
Osaka University,
Toyonaka, Osaka 560-0043, Japan}
\email{hibi@math.sci.osaka-u.ac.jp}
\author[]{Lukas Katth\"an}
\address[]{Fachbereich Mathematik und Informatik, Philipps-Universit\"at Marburg,
35032 Marburg, Germany}%
\email{katthaen@mathematik.uni-marburg.de}%
\thanks{
{\bf 2010 Mathematics Subject Classification}:
Primary 52B20; Secondary 13H10, 14M25. \\
\hspace{5.3mm}{\bf Key words and phrases}:
finite graph, edge ring, edge polytope, Serre's condition $(R_{1})$. \\
\hspace{5.3mm}The first author is supported by the JST CREST 
``Harmony of Gr\"obner Bases and the Modern Industrial Society.''\\
\hspace{5.3mm}This research was performed while the second author was staying
at Department of Pure and Applied Mathematics, 
Osaka University, November 2011 -- April 2012, supported by the DAAD}
%%%%%%%%%%%%%%%%%%%%%%%%%%%%%%%%%%%%%%%%
%% General info
%%%%%%%%%%%%%%%%%%%%%%%%%%%%%%%%%%%%%%%%
%\subjclass[2000]{13P10}
%\subjclass[2010]{13P10}
% \date{\today}
%\keywords{edge ring, toric ideal, finite graph, Gr\"obner basis, initial ideal}
% \dedicatory{}
%%%%%%%%%%%%%%%%%%%%%%%%%%%%%%%%%%%%%%%%
%%%%%%%%%%%%%%%%%%%%%%%%%%%%%%%%%%%%%%%%
\begin{abstract}
A combinatorial criterion for the edge ring of a finite connected graph
to satisfy Serre's condition $(R_{1})$ is studied.
\end{abstract}
\maketitle
%\thispagestyle{empty}
%%%%%%%%%%%%%%%%%%%%%%%%%%%%%%%%%%%%%%%%
\section*{Introduction}
The edge polytopes and edge rings of finite connected graphs have been 
studied from the viewpoints of both combinatorics and computational commutative algebra 
(\cite{OH98}, \cite{OH99}).
Especially, a combinatorial characterization for the edge ring to be normal 
is obtained by both \cite{OH98} and \cite{SVV} independently.
It follows immediately from \cite[Theorem 6.4.2]{BH} that
a normal edge ring is Cohen--Macaulay.
However, in general it seems unclear when the edge ring is Cohen--Macaulay.
Recall that a noetherian ring is normal if and only if 
it satisfies Serre's conditions $(R_{1})$ and $(S_{2})$.
Thus in particular an edge ring satisfying Serre's condition $(R_{1})$
is normal if and only if it is Cohen--Macaulay.
In the present paper the problem when a given edge ring satisfies Serre's condition
$(R_{1})$ is investigated.

\section{Edge rings and edge polytopes of finite connected graphs}
First, we recall from \cite{OH98} what edge rings and edge polytopes 
of finite connected graphs are.
Let $G$ be a finite connected graph on the vertex set $[\,d\,] = \{ 1, \ldots, d \}$
with $E(G) = \{ e_{1}, \ldots, e_{n}\}$ its edge set.  We always assume that
$G$ is simple, i.e., $G$ has no loop and no multiple edge.
Let $\eb_{1}, \ldots, \eb_{d}$ denote the $i$th unit coordinate vectors of $\RR^{d}$. 
We associate each edge $e = \{ i, j \} \in E(G)$ with the vector 
$\rho(e) = \eb_{i} + \eb_{j} \in \RR^{d}$.
The {\em edge polytope} is the convex polytope $\Pc_{G} \subset \RR^{d}$ which
is the convex hull of the finite set $\{ \rho(e_{1}), \ldots, \rho(e_{n}) \}$. 
Let $K[{\bf t}] = K[t_{1}, \ldots, t_{d}]$ be the polynomial ring 
in $d$ variables over a field $K$.
We associate each edge $e = \{ i, j \} \in E(G)$ with the quadratic monomial 
${\bf t}^{e} = t_{i}t_{j} \in K[{\bf t}]$.
The {\em edge ring} is the affine semigroup ring 
$K[G] = K[{\bf t}^{e_{1}}, \ldots, {\bf t}^{e_{n}}]$.

Let, in general, $\Pc \subset \RR^{d}$ be an integral convex polytope, i.e.,
a convex polytope all of whose vertices have integer coordinates, which lies
on a hyperplane $\Hc \subset \RR^{d}$ with ${\bf 0} \not\in \Hc$, where
${\bf 0}$ is the origin of $\RR^{d}$.  
We assume that $\Pc \subset \RR_{\geq 0}^{d}$, where $\RR_{\geq 0}$
is the set of nonnegative real numbers.
Then for each integer point $\ab = (a_{1}, \ldots, a_{d})$ belonging to $\Pc$,
we associate the monomial ${\bf t}^{\ab} = t_{1}^{a_{1}} \cdots t_{d}^{a_{d}}
\in K[{\bf t}]$.  The {\em toric ring} of $\Pc$ is the affine semigroup ring
$K[\Pc] = K[\{ {\bf t}^{\ab} \, : \, \ab \in \Pc \cap \ZZ^{d} \}]$. 
Thus in particular the edge ring $K[G]$ of a finite connected graph $G$ 
is the toric ring of the edge polytope $\Pc_{G}$ of $G$. 

We say that an integral convex polytope $\Pc$ is {\em normal} if its toric ring
$K[\Pc]$ is normal.  It is shown \cite{OH98} and \cite{SVV} that the edge ring of
a finite connected graph is normal if and only if $G$ satisfies the \emph{odd cycle condition}.
This condition states that for every two disjoint minimal odd cycles in $G$, there is an
 edge with one endpoint in each cycle \cite[p. 410]{OH98}. 
Recall that a toric ring is normal if and only if 
it satisfies Serre's condition $(R_{1})$ and $(S_{2})$
and that every normal toric ring is Cohen--Macaulay.
Thus in particular a toric ring satisfying Serre's condition $(R_{1})$
is normal if and only if it is Cohen--Macaulay.
In the present paper the problem when a given edge ring satisfies Serre's condition
$(R_{1})$ is investigated.

\section{When does an edge ring satisfy Serre's condition $(R_{1})$\,?}
Let $G$ be a finite connected graph on the vertex set 
$[\,d\,] = \set{1,\dotsc, d}$.  If $G$ is bipartite, then
$K[G]$ is normal and satisfies Serre's condition $(R_{1})$.
Thus in what follows we assume that $G$ is nonbipartite, i.e., 
$G$ possesses at least one odd cycle.

If $T$ is a nonempty subset of $[\,d\,]$, then the induced subgraph of $G$ on $T$ is denoted by $G_{T}$.
A nonempty subset $T$ of $[\,d\,]$ is called {\em independent} if $\{ i, j \} \not\in E(G)$
for all $i, j \in T$ with $i \neq j$.
If $T$ is independent and if $N(G;T)$ is the set of vertices
$j \in [\,d\,]$ with $\{ i, j \} \in E(G)$ for some $i \in T$, then
the {\em bipartite graph induced by $T$} % in $G$ 
is defined to be the bipartite graph having the vertex set $T \cup N(G;T)$
and consisting of all edges $\{ i , j \} \in E(G)$ with $i \in T$ and $j \in N(G;T)$.
We say that a nonempty subset $T\subset [\,d\,]$ is {\em fundamental} if 
\begin{itemize}
	\item 
		$T$ is independent;
	\item
		the bipartite graph induced by $T$ % in $G$ 
		is connected;
	\item
		either $T \cup N(G;T) = [\,d\,]$ or every connected component of
		the induced subgraph $G_{[\,d\,]\setminus (T \cup N(G;T))}$ has at least one odd cycle.
\end{itemize}
Moreover, we call a vertex $i \in [\,d\,]$ {\em regular} if every connected component of
$G_{[\,d\,]\setminus i}$ has at least one odd cycle. Note that a regular vertex is not the same as a
fundamental set with one element.

We are now in the position to state our criterion for an edge ring to satisfy
Serre's condition $(R_{1})$.
\begin{theorem}
\label{main}
	Let $G$ be a finite connected nonbipartite graph on $[\,d\,]$.
	Then the edge ring $K[G]$
	of $G$ satisfies Serre's condition $(R_1)$ if and only if the following conditions are satisfied:
	\begin{enumerate}
	\item[(\hspace{0.5mm}i\hspace{0.5mm})] 
		For every regular vertex $i\in [\,d\,]$, the induced subgraph $G_{[\,d\,]\setminus i}$ is connected;
	\item[(ii)] For every fundamental set $T \subset [\,d\,]$,
		one has either $T\cup N(G;T) = [\,d\,]$ or the induced subgraph $G_{[\,d\,]\setminus (T\cup N(G;T))}$ 
		is connected.
	\end{enumerate}
\end{theorem}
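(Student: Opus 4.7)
The strategy is to translate Serre's condition $(R_{1})$ for the toric ring $K[G]$ into a lattice condition at each facet of the cone $\RR_{\geq 0}\Pc_{G}$, and then verify this condition combinatorially using the classification of facets of the edge polytope.

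First I would recall from the theory of edge polytopes \cite{OH98} that, when $G$ is connected and nonbipartite, every facet of $\Pc_{G}$ is either a facet $F_{i} = \Pc_{G} \cap \{x_{i}=0\}$ coming from a regular vertex $i$, or a facet $F_{T}$ supported on the hyperplane $\sum_{j \in N(G;T)} x_{j} = \sum_{i \in T} x_{i}$ coming from a fundamental set $T$. The relevant criterion for $(R_{1})$ is: for a positive affine semigroup $S$ generating a group $L$, the ring $K[S]$ satisfies $(R_{1})$ if and only if for each facet $F$ of $\RR_{\geq 0} S$ the subgroup $\ZZ(S \cap F)$ equals $L \cap \mathrm{span}(F)$. (The accompanying requirement that the primitive functional vanishing on $F$ take the value $1$ on some element of $S$ is automatic in our setting, using that $G$ is connected and nonbipartite.) Since $G$ is nonbipartite and connected, $L$ itself is the even-sum sublattice $\{x \in \ZZ^{d} : x_{1}+\cdots+x_{d} \text{ is even}\}$.

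I would then apply the criterion facet-by-facet. For $F_{i}$, the edges lying on $F_{i}$ are exactly those of $G_{[\,d\,] \setminus i}$, and since $i$ is regular every connected component of $G_{[\,d\,]\setminus i}$ contains an odd cycle. A standard calculation shows that the lattice generated by the edge vectors of a connected nonbipartite graph on vertex set $V$ is the whole even-sum sublattice of $\ZZ^{V}$, while for a graph with several nonbipartite components one obtains only the smaller sublattice imposing an even-sum condition on \emph{each} component separately. Hence $\ZZ(S \cap F_{i}) = L \cap \{x_{i}=0\}$ precisely when $G_{[\,d\,]\setminus i}$ is connected, yielding condition (i). For $F_{T}$, the edges on $F_{T}$ split into the edges between $T$ and $N(G;T)$ and the edges of $G_{[\,d\,] \setminus (T \cup N(G;T))}$, living in disjoint coordinate blocks $A = T \cup N(G;T)$ and $B = [\,d\,] \setminus A$. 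The former, connected by the fundamental-set assumption, generate exactly $\{x \in \ZZ^{A} : \sum_{i \in T} x_{i} = \sum_{j \in N(G;T)} x_{j}\}$, while the latter generate the even-sum lattice on $\ZZ^{B}$ precisely when $G_{B}$ is connected (the case $B = \emptyset$ being trivial). Comparing with $L \cap \mathrm{span}(F_{T}) = \{x : \sum_{T} x_{v} = \sum_{N(G;T)} x_{v},\ \sum_{B} x_{v} \text{ even}\}$ then yields condition (ii).

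The main technical obstacle is the fundamental-set case, where one must combine the bipartite-block and odd-cycle-block contributions as a direct sum and verify that this equals the prescribed target, using that $\sum_{T} x_{v} = \sum_{N(G;T)} x_{v}$ forces $\sum_{A} x_{v}$ to be even so that the global parity condition transfers cleanly between the two blocks. For the ``only if'' direction, whenever (i) or (ii) fails one exhibits an explicit lattice element not in $\ZZ(S \cap F)$ --- typically a vector of the form $\eb_{u} + \eb_{w}$ with $u, w$ in distinct components of the disconnected induced subgraph in question --- witnessing the failure of $(R_{1})$ at the corresponding facet.
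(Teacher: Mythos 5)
Your proposal follows essentially the same route as the paper's proof: the Ohsugi--Hibi facet classification, Vitulli's lattice-theoretic criterion for $(R_1)$ applied facet by facet, the identification of $gp(M_G)$ with the even-coordinate-sum sublattice of $\ZZ^d$, and the same component/block decompositions of the lattice generated by the edges on each facet. The one step you gloss over is the ``takes the value $1$'' requirement in the case $T\cup N(G;T)=[\,d\,]$, where every edge has $\sigma_T$-value in $\{0,2\}$ and the correct primitive support form on $gp(M_G)$ is $\frac{1}{2}\sigma_T$; the paper devotes a short lemma to this, but your parenthetical claim that it always holds is correct.
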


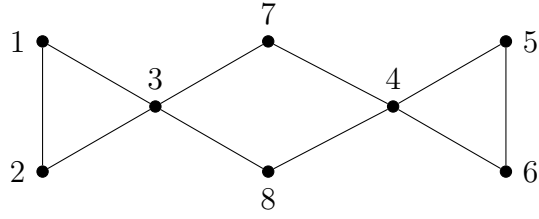
\begin{figure}
\begin{tikzpicture}[scale=1]
	\path (-2.58,0)
		node[normal, label=left:$1$]  (1) at +(120:1) {}
		node[normal, label=left:$2$]  (2) at +(240:1) {}
		node[normal, label=above:$3$] (3) at +(  0:1) {};

	\path (2.58,0)
		node[normal, label=above:$4$] (4) at +(180:1) {}
		node[normal, label=right:$5$] (5) at +( 60:1) {}
		node[normal, label=right:$6$] (6) at +(300:1) {};	
	
	\path (-0.58,0)
		node[normal, label=above:$7$]   (7) at +(60:1) {}
		node[normal, label=below:$8$] (8) at +(300:1) {}; 

	\draw (1) -- (2) -- (3) -- (1);
	\draw (4) -- (5) -- (6) -- (4);
	\draw (3) -- (7) -- (4);
	\draw (3) -- (8) -- (4);
\end{tikzpicture}
\caption{The graph $G$ of Example 1}
\label{fig:Gkgraph}
\end{figure}

\begin{example}
Let $G$ be the finite connected graph on $\{ 1, \ldots, 8 \}$ depicted in Figure~1. %with the edges 
%$12, 23, 13, 45, 56, 46, 17, 47, 18$ and $48$, where each $ij$ stand for the edge $\{ i, j \}$.
The graph $G$ clearly violates the odd cycle condition, hence the edge ring $K[G]$
is not normal.
The only vertices whose removal makes $G$ disconnected are $3$ and $4$, but both are not regular.
If $T \subset [\,8\,]$ is a set such that $G_{[\,d\,]\setminus (T\cup N(G;T))}$ is disconnected, then either $3$ or
$4$ are contained in $T\cup N(G;T)$. But then $G_{[\,d\,]\setminus (T\cup N(G;T))}$ has only one odd cycle left,
so $T$ cannot be fundamental.
Hence $K[G]$ satisfies Serre's condition $(R_{1})$.
More generally, the same argument shows that the graphs $G_{k+6}$ constructed in \cite{HHKO} satisfy
$(R_{1})$ if and only if $k \geq 2$.
%The regular vertices of $G$ are $2,3,5,6,7$ and $8$.  Each of them satisfies
%the condition (\,i\,) of Theorem \ref{main}.  The fundamental sets are
%5$\set{i}$ with $i = 1,2,3,4,5,6$, $\set{1,4}$ and $\set{2,5,7,8}$.
%Each of them satisfies the condition (ii) of Theorem \ref{main}.  
\end{example}

%\newpage

\section{Proof of Theorem \ref{main}}
First recall the description of the facets of $\Pc_{G}$.
To every regular vertex $i$ we associate the linear form $\sigma_i: \RR^d \rightarrow \RR$ 
which projects onto the $i$-th component. 
Moreover, we set $\Hc_i = \set{x\in \RR^d \, : \, \sigma_i(x)=0}$ 
and $\Fc_i = \Pc_G \cap \Hc_i$.
Similarly, to every fundamental set $T$ we associate the linear form 
\[\sigma_T: \RR^d \ni (x_1,\dotsc,x_d) \mapsto \sum_{j \in N(G;T)} x_j - \sum_{i\in T} x_i \]
and we set $\Hc_T = \set{x \in \RR^d \, : \, \sigma_T(x) = 0}$ 
and $\Fc_T = \Pc_G \cap \Hc_T$.

\begin{lemma}[\cite{OH98}]
The facets of $\Pc_G$ are exactly the sets $\Fc_i$ and $\Fc_T$ for all regular vertices $i$ 
and all fundamental sets $T$.
\end{lemma}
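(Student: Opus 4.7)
The plan is to prove both directions using the standard dimension formula for edge polytopes. Since $G$ is connected and nonbipartite, $\dim \Pc_G = d - 1$, so facets are exactly the $(d-2)$-dimensional faces. The main computational tool is the following: for any graph $H$, the affine hull of $\set{\rho(e) : e \in E(H)}$ in $\RR^{V(H)}$ has dimension $|V(H)| - 1 - b(H) - \iota(H)$, where $b(H)$ counts the connected components of $H$ that are bipartite and contain at least one edge, and $\iota(H)$ counts the isolated vertices.

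For the \emph{if} direction, I would check each candidate directly. The inequality $\sigma_i(x) = x_i \geq 0$ is trivially valid on $\Pc_G \subseteq \RR^d_{\geq 0}$, and the face $\Fc_i$ agrees with the edge polytope of $G_{[\,d\,]\setminus i}$ inside $\set{x_i = 0}$. The dimension formula yields $\dim \Fc_i = d - 2 - b(G_{[\,d\,]\setminus i}) - \iota(G_{[\,d\,]\setminus i})$, which equals $d-2$ precisely when $G_{[\,d\,]\setminus i}$ has no bipartite components and no isolated vertices, i.e.\ when $i$ is regular. For $\sigma_T$, the independence of $T$ implies $\sigma_T(\rho(e)) \in \set{0,1,2}$ for every edge $e$, so $\sigma_T \geq 0$ is valid on $\Pc_G$. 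The edges achieving equality are those of $B_T$ together with those of $G' := G_{[\,d\,]\setminus(T \cup N(G;T))}$. Since $B_T$ is bipartite by construction, applying the dimension formula to $B_T \sqcup G'$ yields $\dim \Fc_T = d - 2$ exactly when $B_T$ is connected and every connected component of $G'$ contains an odd cycle, which is the definition of fundamental.

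For the \emph{only if} direction, let $F$ be a facet with supporting functional $\sigma \in \RR^d$ normalized so that $\sigma(\rho(e)) \geq 0$ on every edge, with equality for $e$ on $F$. Set $T := \set{i \in [\,d\,] : \sigma_i < 0}$. If $T = \emptyset$, then the edges on $F$ have both endpoints in $Z := \set{i : \sigma_i = 0}$, giving $\dim F \leq |Z| - 1$. Combined with $\dim F = d-2$ and $\sigma \not\equiv 0$, this forces $|Z| = d-1$, so $F = \Fc_k$ for the unique $k \notin Z$, and the forward computation forces $k$ to be regular.

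If $T \neq \emptyset$, the inequality $\sigma_a + \sigma_b \geq 0$ on edges forces $T$ to be independent, and every $j \in N(G;T)$ satisfies $\sigma_j > 0$. For each edge $\set{i,j}$ of $B_T$ on $F$, the equation $\sigma_j = -\sigma_i$ propagates values of $\sigma$ across $B_T$. The main obstacle is to show that $\sigma$ must be constant equal to some $-\alpha$ on $T$ and $\alpha$ on $N(G;T)$, with zeros elsewhere. The strategy is a dimension comparison: if $\sigma$ took multiple distinct nonzero values or were nonzero outside $T \cup N(G;T)$, the edges of $F$ would form a proper subgraph of $B_T \sqcup G'$, and the dimension formula would yield $\dim F < d - 2$, contradicting facet-hood. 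Hence $\sigma$ is a positive multiple of $\sigma_T$, so $F = \Fc_T$, and the forward computation shows $T$ is fundamental.
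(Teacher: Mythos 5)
The paper itself does not prove this lemma --- it is quoted from \cite{OH98} --- so there is no in-paper argument to compare against; what you have written is a reconstruction, and its overall architecture is the right one (and essentially that of Ohsugi--Hibi): establish the rank formula $\dim\operatorname{aff}\set{\rho(e) \with e\in E(H)} = |V(H)|-1-b(H)-\iota(H)$, use it to show each $\Fc_i$ ($i$ regular) and $\Fc_T$ ($T$ fundamental) has dimension $d-2$, and then classify all supporting functionals by the sign pattern of their coordinates. The ``if'' direction and the case $T=\emptyset$ of the ``only if'' direction are complete and correct as you state them.

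The one step that does not hold as written is the final inference in the case $T\neq\emptyset$: ``the edges of $F$ would form a proper subgraph of $B_T\sqcup G'$, and the dimension formula would yield $\dim F < d-2$.'' A proper subgraph can have the same rank (delete an edge from a $2$-edge-connected nonbipartite component of $G'$ and nothing drops), so properness alone proves nothing; moreover you have not yet shown that the edge set of $F$ sits inside $B_T\sqcup G'$ at all, since a priori $P:=\set{j \with \sigma_j>0}$ may be strictly larger than $N(G;T)$. The repair uses exactly the rank formula you already set up. Let $H$ be the graph of edges $e$ with $\sigma(\rho(e))=0$, regarded on the full vertex set $[\,d\,]$; then $\dim F = d-1-b(H)-\iota(H)$, so facet-hood forces $b(H)+\iota(H)=1$. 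Every edge of $H$ either joins $T$ to $P$ or lies in $G_Z$ with $Z=\set{i \with \sigma_i=0}$, and every vertex of $T\cup P$ that is not isolated in $H$ lies in a bipartite ($T$--$P$) component of $H$; since $T\neq\emptyset$ forces $P\supseteq N(G;T)\neq\emptyset$, an empty $T$--$P$ part would give $\iota(H)\geq 2$. Hence $b(H)=1$, $\iota(H)=0$, and the $T$--$P$ part of $H$ is a single connected bipartite graph covering all of $T\cup P$. Only now does the relation $\sigma_j=-\sigma_i$ propagate along this connected graph to show $\sigma$ is constant $-\alpha$ on $T$, $\alpha$ on $P$, and $0$ elsewhere, whence $P=N(G;T)$ and $\sigma=\alpha\sigma_T$. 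With that substitution your argument closes correctly.
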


A combinatorial condition for a semigroup ring to satisfy Serre's condition $(R_1)$ 
is explicitly stated in \cite[Theorem 2.7]{MV09}. % \cite[Exercises 4.15, 4.16]{brunsgubel}.
In fact, in \cite{MV09} a characterization of $(R_l)$ for all $l$ is given, but for our purposes we only need the case $l=1$.

\begin{proposition}[\cite{MV09}]
\label{BG}
Let $M$ be an affine monoid, $K$ a field and $K[M]$ its semigroup ring.
Then $K[M]$ satisfies Serre's condition $(R_1)$ if and only if 
every facet $\Fc$ of $M$ satisfies the following two conditions:
\begin{enumerate}
\item[(\hspace{0.5mm}i\hspace{0.5mm})]
There exists $x \in M$ such that $\sigma_{\Fc}(x) = 1$, 
where $\sigma_{\Fc}$ is a support form of $\Fc$ taking integer values on $gp(M)$. 
\item[(ii)]
$gp(M\cap \Fc) = gp(M) \cap \Hc$, where $\Hc$ is the supporting hyperplane of $\Fc$; 
\end{enumerate}
where $gp(M)$ denotes the additive group generated by $M$.
\end{proposition}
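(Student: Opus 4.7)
The plan is to reduce $(R_1)$ to a regularity check at each facet, then verify regularity of the localization directly from (i) and (ii). For every face $\Fc$ of $C = \RR_{\geq 0} M$, the ideal $\mathfrak{p}_\Fc := (t^a \,:\, a\in M\setminus \Fc) \subset K[M]$ is a prime with $K[M]/\mathfrak{p}_\Fc \cong K[M\cap \Fc]$ and $\height \mathfrak{p}_\Fc = \dim C - \dim \Fc$. Since the natural torus acts by $K$-algebra automorphisms on $K[M]$, the singular locus of $\Spec K[M]$ is torus-invariant, hence a union of subvarieties $V(\mathfrak{p}_\Fc)$ for various faces $\Fc$. Thus $(R_1)$ fails precisely when some facet's subvariety lies entirely in the singular locus, which is equivalent to $R_\Fc := K[M]_{\mathfrak{p}_\Fc}$ failing to be a DVR for some facet $\Fc$ of $C$.

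\textbf{Step 2: Backward direction.} Fix a facet $\Fc$ and a support form $\sigma_\Fc$, integer-valued on $gp(M)$ and nonnegative on $M$. Assume (i) and (ii), and pick $x\in M$ with $\sigma_\Fc(x)=1$. Set $\pi := t^x \in \mathfrak{p}_\Fc$; I claim $\pi$ generates $\mathfrak{p}_\Fc R_\Fc$, which makes the one-dimensional local domain $R_\Fc$ a DVR. For any $b\in gp(M)$ with $k := \sigma_\Fc(b) \geq 0$, the element $b-kx$ has $\sigma_\Fc$-value zero, so (ii) gives $b-kx \in gp(M)\cap \Hc = gp(M\cap \Fc)$, and one writes $b-kx = y_1 - y_2$ with $y_1,y_2 \in M\cap \Fc$; hence $t^b = t^{y_1+kx}/t^{y_2} \in R_\Fc$, the denominator lying in $K[M]\setminus \mathfrak{p}_\Fc$ since $y_2 \in M\cap \Fc$. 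Applying this to $b = a-x$ for each $a \in M\setminus \Fc$ (so that $\sigma_\Fc(a-x) \geq 0$) gives $t^{a-x} \in R_\Fc$ and therefore $t^a = \pi \cdot t^{a-x} \in \pi R_\Fc$, as required.

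\textbf{Step 3: Forward direction.} Conversely, assume $R_\Fc$ is a DVR. The support form $\sigma_\Fc$ extends monomially to a valuation $v$ on $L := \operatorname{Frac}(K[gp(M)])$ via $v(\sum_i c_i t^{a_i}) = \min_i \sigma_\Fc(a_i)$, whose valuation ring $\mathcal{O}_v$ contains $K[M]$ and inverts every element of $K[M]\setminus \mathfrak{p}_\Fc$, so $R_\Fc \subseteq \mathcal{O}_v$; moreover $\mathcal{O}_v$ dominates $R_\Fc$ since $t^a$ for $a\in M\setminus \Fc$ has positive $v$-value. By the standard uniqueness of rank-one valuations dominating a DVR inside $L$, this forces $R_\Fc = \mathcal{O}_v$. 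The value group $v(L^\times) = \sigma_\Fc(gp(M))$ of the DVR $R_\Fc$ must then equal $\ZZ$, yielding (i). Finally, the residue field of $\mathcal{O}_v$ is $\operatorname{Frac}(K[gp(M)\cap \Hc])$, while that of $R_\Fc$ is $\operatorname{Frac}(K[M\cap \Fc]) = \operatorname{Frac}(K[gp(M\cap \Fc)])$; equality of these purely transcendental extensions of $K$ forces $gp(M\cap \Fc) = gp(M)\cap \Hc$, giving (ii).

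\textbf{Main obstacle.} The technical heart is Step 2, and the key insight making it clean is that conditions (i) and (ii) combine to let us ``straighten'' any $b\in gp(M)$ with $\sigma_\Fc(b)\geq 0$ into the form $y_1 + kx - y_2$ with $y_1,y_2 \in M\cap \Fc$ and $x$ fixed. The most slippery step is actually Step 1: rigorously pinning down that $(R_1)$ only depends on the facet-primes requires invoking torus-invariance of the singular locus together with the fact that this locus is a proper closed subset, so it contains no codimension-one subvariety unless some facet produces one---routine, but easy to misstate without care.
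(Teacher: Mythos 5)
The paper does not actually prove this proposition --- it is imported verbatim from Vitulli \cite{MV09} --- so there is no in-paper argument to compare yours against. Your overall strategy (reduce $(R_1)$ to regularity of the one-dimensional local domains $K[M]_{\mathfrak{p}_\Fc}$ at the facet primes, then characterize when such a localization is a DVR via the monomial valuation attached to $\sigma_\Fc$) is the standard one, and Steps 1 and 2 are essentially sound. In Step 1 I would phrase the reduction via the $\ZZ^d$-grading rather than the torus action: the non-regular locus is cut out by a $\ZZ^d$-graded ideal, whose minimal primes are therefore monomial primes, and the monomial primes of $K[M]$ are exactly the $\mathfrak{p}_\Fc$ for faces $\Fc$; this sidesteps questions about torus points over finite or imperfect fields. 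Step 2 is correct and complete.

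The genuine gap is in Step 3, in the derivation of condition (i). After correctly identifying $R_\Fc$ with the monomial valuation ring $\mathcal{O}_v$ by domination, you assert that ``the value group of the DVR must then equal $\ZZ$, yielding (i).'' This neither follows as stated nor yields (i). First, the value group of $v$ is $\sigma_\Fc(gp(M))$ regardless of whether $R_\Fc$ is a DVR; it equals $\ZZ$ exactly when $\sigma_\Fc$ is normalized to be primitive, which is a free choice and carries no information about $R_\Fc$. Second, and more importantly, condition (i) demands an element of $M$, not of $gp(M)$, on which $\sigma_\Fc$ takes the value $1$; surjectivity of $\sigma_\Fc$ onto $\ZZ$ from $gp(M)$ gives only the latter, and the two genuinely differ (e.g.\ when the values of $\sigma_\Fc$ on $M\setminus\Fc$ generate $\ZZ$ as a group but all exceed $1$). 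The missing step is: with $\sigma_\Fc$ primitive, the maximal ideal $\mathfrak{m}_v=\set{f \with v(f)\geq 1}$ of the DVR $\mathcal{O}_v=R_\Fc$ is generated by the monomials $t^a$ with $a\in M\setminus\Fc$, and in a DVR an ideal generated by a family of elements equals the ideal generated by one of minimal value; hence $\min\set{\sigma_\Fc(a) \with a\in M\setminus\Fc}=1$, which is (i). (The residue-field argument for (ii) is fine in substance, but the reason equality of the two fields forces $gp(M\cap\Fc)=gp(M)\cap\Hc$ is the degree count $[\mathrm{Frac}(K[L_2]):\mathrm{Frac}(K[L_1])]=[L_2:L_1]$ for nested full-rank lattices, not pure transcendence.)
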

We apply Proposition \ref{BG} to the affine monoid
\[ 
M_G = \NN (\Pc_G \cap \ZZ^{d}) 
\]
generated by the integer points in $\Pc_G$. 
Note that the support hyperplanes $\Hc_i$ and $\Hc_T$ of $\Pc_G$ 
are also the support hyperplanes of $M_G$. 
We start proving Theorem \ref{main} by the following
\begin{lemma}
Let $G$ be a finite connected nonbipartite graph on the vertex set $[\,d\,]$.
Then the facets of $M_G$ satisfy the first condition of Proposition \ref{BG}.
\end{lemma}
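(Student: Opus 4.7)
The plan is to use Lemma 2.1 to enumerate the facets of $\Pc_G$ and then verify condition (i) of Proposition 2.2 for each of the two families. For a regular vertex $i$, the support form $\sigma_i$ is the projection onto the $i$-th coordinate and is therefore primitive on $gp(M_G) \subset \ZZ^d$; since $G$ is connected, $i$ is incident to at least one edge $\{i,j\} \in E(G)$, and $\rho(\{i,j\}) = \eb_i + \eb_j \in M_G$ satisfies $\sigma_i(\rho(\{i,j\})) = 1$.

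For a fundamental set $T$ I would split into two cases according to whether $T \cup N(G;T) = [\,d\,]$. If $T \cup N(G;T) \neq [\,d\,]$, then connectedness of $G$ forces some edge from $T \cup N(G;T)$ to its complement; this edge cannot have an endpoint in $T$ by the definition of $N(G;T)$, so there is an edge $\{a,b\} \in E(G)$ with $a \in N(G;T)$ and $b \notin T \cup N(G;T)$, giving $\sigma_T(\rho(\{a,b\})) = 1$. Since $\sigma_T$ already takes integer values on $gp(M_G)$, this establishes (i) in this subcase.

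The real content of the lemma, and the main obstacle, is the case $T \cup N(G;T) = [\,d\,]$: here $\sigma_T$ is \emph{not} the primitive integer support form of $\Fc_T$, and one must use $\sigma_T / 2$ instead. I plan to verify this by first showing that $\sigma_T$ takes only even values on $M_G$. Every lattice point of $\Pc_G$ lies on the hyperplane $\sum_i x_i = 2$ with nonnegative integer coordinates, and none equals $2 \eb_i$, since the $i$-th coordinate of each $\rho(e_k)$ is at most $1$ and hence so is the $i$-th coordinate of any convex combination; thus every generator of $M_G$ is of the form $\eb_i + \eb_j$ with $i \neq j$. Using $T \cup N(G;T) = [\,d\,]$, both $i$ and $j$ lie in $T \cup N(G;T)$; the case $i,j \in T$ is excluded because $\sigma_T(\eb_i + \eb_j) = -2$ would contradict $\sigma_T \geq 0$ on $\Pc_G$, while the remaining two cases yield $\sigma_T(\eb_i+\eb_j) \in \set{0, 2}$. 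To close the argument I observe that, since $G$ is nonbipartite and $T$ is independent, the induced subgraph on $N(G;T)$ cannot be independent (otherwise $\{T, N(G;T)\}$ would be a bipartition of $G$); taking any edge $\{a,b\} \in E(G)$ with $a, b \in N(G;T)$, we have $(\sigma_T / 2)(\rho(\{a,b\})) = 1$, which gives condition (i).
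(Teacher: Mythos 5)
Your proof is correct and takes essentially the same route as the paper: an incident edge for a regular vertex, an edge leaving $T\cup N(G;T)$ when that set is proper, and the halved support form $\tfrac{1}{2}\sigma_T$ (integer-valued on $gp(M_G)$ because $\sigma_T$ is even on all generators) when $T\cup N(G;T)=[\,d\,]$. You even supply a detail the paper leaves implicit, namely that nonbipartiteness forces an edge inside $N(G;T)$, on which $\tfrac{1}{2}\sigma_T$ takes the value $1$.
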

\begin{proof}
First, let $i\in [\,d\,]$ be a regular vertex. 
Since $G$ is connected, there exists an edge $e = \set{i,j} \in E(G)$ to another vertex $j$. 
Then $\sigma_i(\rho(e)) = 1$.

Second, let $T\subset [\,d\,]$ be a fundamental set. 
If $T \cup N(G;T) \subsetneq [\,d\,]$, 
then there exists an edge $e = \set{i,j} \in E(G)$ such that 
$i \in N(G;T)$ and $j \in [\,d\,]\setminus(T\cup N(G;T))$. 
This edge satisfies $\sigma_T(\rho(e)) = 1$. 
If instead $T \cup N(G;T) = [\,d\,]$, 
then every edge of $G$ has either both endpoints in $N(G;T)$, 
or one in $N(G;T)$ and one in $T$. Hence $\sigma_T(e) \in \set{0,2}$ 
for every edge $e$ of $G$. It then follows that 
$\frac{1}{2}\sigma_T$ satisfies the condition of Proposition \ref{BG}.
\end{proof}

To check the second condition of Proposition \ref{BG} we need to compute the lattice generated by $M_G$. 
The following Lemma \ref{lemma:gp} appears in \cite[p. 426]{OH98}
without an explicit proof.  However, 
for the sake of completeness, we give its detailed proof.

% Note that this gives also an alternative proof of the dimension of $\Pc_G$ 
% for the case of a non-bipartite graph.

\begin{lemma}
\label{lemma:gp}
Let $G$ be a finite connected nonbipartite graph on the vertex set $[\,d\,]$.
Then the lattice $gp(M_G)$ is the set of all integer vectors in $\ZZ^d$ with an even coordinate sum.
\end{lemma}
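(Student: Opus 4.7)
The plan is to prove the two inclusions separately. For the ``easy'' inclusion $gp(M_G) \subseteq \{\vb\in\ZZ^d : \sum v_i \text{ even}\}$, I observe that every vertex $\rho(e) = \eb_i+\eb_j$ of $\Pc_G$ lies on the hyperplane $\sum_{k=1}^d x_k = 2$, so $\Pc_G$ itself lies on this hyperplane. Consequently, every integer point of $\Pc_G$ has coordinate sum $2$, every element of $M_G = \NN(\Pc_G \cap \ZZ^d)$ has coordinate sum in $2\NN$, and every difference of two such elements has even coordinate sum.

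For the reverse inclusion, I plan to exhibit a $\ZZ$-basis of the even-sum lattice explicitly inside $gp(M_G)$; the convenient basis is $\{2\eb_1\} \cup \{\eb_1 + \eb_j : 2 \le j \le d\}$. The two ingredients I need are
\begin{enumerate}
\item[(a)] $2\eb_v \in gp(M_G)$ for every $v \in [d]$;
\item[(b)] $\eb_i + \eb_j \in gp(M_G)$ for every pair $i\neq j$.
\end{enumerate}

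To establish (a), I use the hypothesis that $G$ is nonbipartite: fix an odd cycle $v_1,v_2,\dotsc,v_{2k+1},v_1$ with edges $f_\ell = \{v_\ell,v_{\ell+1}\}$ (indices mod $2k+1$). Taking the alternating sum $\sum_{\ell=1}^{2k+1} (-1)^{\ell+1}\rho(f_\ell)$, every intermediate vertex appears with cancelling signs while $v_1$ picks up coefficient $+2$; this yields $2\eb_{v_1} \in gp(M_G)$. Then, since $G$ is connected, for every edge $\{v,w\}$ the relation $2\eb_w = 2\rho(\{v,w\}) - 2\eb_v$ propagates $2\eb_v \in gp(M_G)$ from $v_1$ to every vertex. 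For (b), pick any path $i=u_0,u_1,\dotsc,u_k=j$ in $G$ and compute the telescoping sum $\sum_{\ell=1}^k \rho(\{u_{\ell-1},u_\ell\}) = \eb_i + \eb_j + 2\sum_{\ell=1}^{k-1}\eb_{u_\ell}$; rearranging and using (a) on the intermediate vertices shows $\eb_i + \eb_j \in gp(M_G)$.

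I do not anticipate a serious obstacle here; the main point is just to make the ``odd cycle trick'' in step (a) explicit, since it is the only place where the nonbipartite hypothesis enters (otherwise $gp(M_G)$ would be a proper index-$2$ sublattice of the even-sum lattice, generated by $\eb_i+\eb_j$ with $i$ and $j$ on opposite sides of the bipartition). With (a) and (b) in hand, the conclusion is immediate: the stated basis of the even-sum lattice is contained in $gp(M_G)$, giving the reverse inclusion and hence equality.
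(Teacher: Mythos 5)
Your proposal is correct and follows essentially the same route as the paper: the easy inclusion from the even coordinate sum of the generators, and the reverse inclusion via the alternating sum over an odd cycle to obtain $2\eb_i$, combined with connectivity to produce a $\ZZ$-basis of the even-sum lattice inside $gp(M_G)$. The only (cosmetic) difference is that the paper takes the $\rho(e)$ over a spanning tree of $G$ together with $2\eb_i$ as the basis, whereas you propagate $2\eb_v$ and $\eb_i+\eb_j$ along paths and use the star basis $\{2\eb_1\}\cup\{\eb_1+\eb_j\}$, which makes the basis verification slightly more explicit.
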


\begin{proof}
Since every generator of $M_G$ has an even coordinate sum, it follows that
the lattice $gp(M_G)$ is contained in the set of all integer vectors in $\ZZ^d$ 
with an even coordinate sum.

To prove the converse, assume the edges $e_1, \dotsc, e_{\ell}$ %\set{1, \dotsc, \ell}$
form an odd cycle of $G$ and let $i$ be the common vertex of $e_1$ and $e_{\ell}$.
Then
\[ 2 \eb_{i} = \sum_{j = 1}^{\ell} (-1)^{j+1} \rho(e_{j}) \, \in gp(M_G).\]
Now consider a spanning tree $G'$ of $G$. The set $\set{\rho(e) \with e \in E(G')}$ together
with $2 \eb_i$ forms a $\ZZ$-basis for the space of all integer vectors in $\ZZ^d$ with an even coordinate sum.
\end{proof}

Now, we can prove two propositions, which complete our proof of Theorem \ref{main}.
\begin{proposition}
\label{Boston}
Let $G$ be a finite connected nonbipartite graph on the vertex set $[\,d\,]$ and
let $i\in [\,d\,]$ be a regular vertex of $G$. Then $\Fc_i$ satisfies the second condition of 
Proposition \ref{BG} if and only if $G_{[\,d\,]\setminus i}$ is connected.
\end{proposition}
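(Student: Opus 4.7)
The plan is to apply Proposition \ref{BG} at the facet $\Fc_i$ and unpack both sides via Lemma \ref{lemma:gp}. As a preliminary observation, the lattice points of $\Pc_G$ are exactly the edge vectors $\rho(e)$: any lattice point of $\Pc_G$ has coordinate sum $2$ and thus equals either $\eb_p+\eb_q$ with $p\neq q$ or $2\eb_p$, and writing such a point as a convex combination $\sum \lambda_e \rho(e)$ and matching coordinates forces $\{p,q\}\in E(G)$ in the first case and is impossible in the second. Since $\sigma_i(\rho(e))=0$ if and only if $e$ avoids the vertex $i$, the monoid $M_G \cap \Fc_i$ is generated by $\{\rho(e) : e \in E(G_{[d]\setminus i})\}$. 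Meanwhile, by Lemma \ref{lemma:gp} applied to $G$, the group $gp(M_G)\cap\Hc_i$ is exactly the set of integer vectors in $\Hc_i$ with even coordinate sum.

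For the sufficiency, assume $G_{[d]\setminus i}$ is connected. Since $i$ is regular, $G_{[d]\setminus i}$ contains an odd cycle and is therefore nonbipartite, so Lemma \ref{lemma:gp} applied to $G_{[d]\setminus i}$ shows that the group generated by $\{\rho(e):e\in E(G_{[d]\setminus i})\}$ is precisely the even-sum integer vectors of $\ZZ^{[d]\setminus i}$. This matches $gp(M_G)\cap\Hc_i$, giving condition (ii).

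For the necessity, suppose $G_{[d]\setminus i}$ has connected components $G_1, \dotsc, G_s$ with $s \geq 2$; regularity of $i$ forces each $G_j$ to contain an odd cycle. Since every edge of $G_{[d]\setminus i}$ lies entirely within a single component and the vertex sets $V(G_j)$ are pairwise disjoint, we obtain a direct sum decomposition $gp(M_G \cap \Fc_i) = \bigoplus_{j=1}^{s} gp(M_{G_j})$, and applying Lemma \ref{lemma:gp} to each $G_j$ identifies this group with the set of integer vectors in $\ZZ^{[d]\setminus i}$ whose partial sum over $V(G_j)$ is even for every $j$. Choosing $k_1 \in V(G_1)$ and $k_2 \in V(G_2)$, the vector $\eb_{k_1}+\eb_{k_2}$ has even total sum, hence lies in $gp(M_G)\cap\Hc_i$, but has odd sum on $V(G_1)$, hence lies outside $gp(M_G\cap\Fc_i)$; this exhibits the failure of condition (ii).

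The main step requiring care is the preliminary identification of lattice points and monoid generators of $M_G \cap \Fc_i$. Once this is in hand, both directions are essentially immediate applications of Lemma \ref{lemma:gp} together with the fact that, in the disconnected case, one obtains a direct sum of sublattices rather than the full even-sum lattice on $\ZZ^{[d]\setminus i}$.
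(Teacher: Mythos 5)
Your proof is correct and follows essentially the same route as the paper: decompose $gp(M_G\cap\Fc_i)$ over the connected components of $G_{[\,d\,]\setminus i}$, apply Lemma \ref{lemma:gp} to each (nonbipartite, by regularity of $i$) component, and in the disconnected case exhibit a vector with even total coordinate sum but odd sum on one component. The only additions are the explicit verification that the lattice points of $\Pc_G$ are the $\rho(e)$ and the concrete witness $\eb_{k_1}+\eb_{k_2}$, both of which the paper leaves implicit.
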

\begin{proof}
We denote the connected components of $G_{[\,d\,]\setminus i}$ with $G'_j$. 
Then it is easy to see that $M_{G_{[\,d\,]\setminus i}} = \bigoplus_j M_{G'_j}$ and 
hence $gp(M_G \cap \Fc_i) = gp(M_{G_{[\,d\,]\setminus i}}) = \bigoplus_j gp(M_{G'_j})$. 
Since every $G'_j$ is connected and contains an odd cycle, 
we can use Lemma \ref{lemma:gp} to describe $gp(M_{G'_j})$.
If $G_{[\,d\,]\setminus i}$ is connected, then $gp(G_{[\,d\,]\setminus i})$ and 
$gp(M_G) \cap \Hc_i$ are both the set of integer vectors in $\ZZ^d$ 
with even coordinate sum and $i$th coordinate equal to zero, thus these sets coincide.

We consider the case that $G_{[\,d\,]\setminus i}$ has at least two different connected components 
$G'_1, G'_2$. Then we can choose a vector $x \in \ZZ^d$ such that (i) its coordinate sum 
is even, (ii) $\sigma_i(x) = 0$, and (iii) the restricted coordinate sum over the vertices 
in $G'_1$ is odd. This $x$ is contained in $gp(M_G) \cap \Hc_i$, 
but not in $gp(G_{[\,d\,]\setminus i})$, thus $\Fc_i$ violates the condition.
\end{proof}

\begin{proposition}
Let $G$ be a finite connected nonbipartite graph on the vertex set $[\,d\,]$ and 
let $T \subset [\,d\,]$ be a fundamental set of $G$. Then $\Fc_T$ satisfies the second condition of 
Proposition \ref{BG} if and only if 
one has either $T\cup N(G;T) = [\,d\,]$ or the induced subgraph $G_{[\,d\,]\setminus (T\cup N(G;T))}$ 
is connected.
\end{proposition}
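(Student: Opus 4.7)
The plan is to adapt the strategy of Proposition \ref{Boston} by describing $M_G \cap \Fc_T$ explicitly and then comparing $gp(M_G \cap \Fc_T)$ with $gp(M_G) \cap \Hc_T$ via Lemma \ref{lemma:gp}.

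First I would identify the relevant submonoid. For every edge $e = \{a,b\} \in E(G)$ one computes $\sigma_T(\rho(e)) \in \{0,1,2\}$, with value $0$ precisely when $e$ is either an edge of the bipartite graph $B_T$ induced by $T$ (one endpoint in $T$, one in $N(G;T)$) or an edge of the induced subgraph $H_T := G_{[\,d\,]\setminus (T \cup N(G;T))}$. Since $\sigma_T \geq 0$ on $\Pc_G$ and $\Pc_G \cap \ZZ^d = \{\rho(e) : e \in E(G)\}$ (every integer point of $\Pc_G$ lies on the hyperplane $\sum x_i = 2$, forcing it to be a genuine edge vector), any element of $M_G$ on $\Hc_T$ is a nonnegative integer combination of these ``zero edges.'' Because the vertex supports of $B_T$ and $H_T$ are disjoint, this yields $M_G \cap \Fc_T = M_{B_T} \oplus M_{H_T}$, and hence $gp(M_G \cap \Fc_T) = gp(M_{B_T}) \oplus gp(M_{H_T})$.

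Next I would compute each summand. For the connected bipartite graph $B_T$, a spanning-tree argument in the style of Lemma \ref{lemma:gp} shows that $gp(M_{B_T})$ is the set of $y \in \ZZ^{T \cup N(G;T)}$ satisfying $\sum_{i \in T} y_i = \sum_{j \in N(G;T)} y_j$. For $H_T$, every connected component contains an odd cycle by the fundamental-set hypothesis, so Lemma \ref{lemma:gp} applied componentwise gives that $gp(M_{H_T})$ consists of those $z \in \ZZ^{[\,d\,]\setminus (T\cup N(G;T))}$ whose coordinate sum is even on every connected component of $H_T$.

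Finally I would compute $gp(M_G) \cap \Hc_T$ via Lemma \ref{lemma:gp}: it consists of $x = (y,z) \in \ZZ^d$ with $\sum_i x_i$ even and $\sigma_T(x) = 0$. The constraint $\sigma_T(x) = 0$ lives purely in the $y$-coordinates and already forces $\sum y$ even, so the total parity condition reduces to $\sum z$ even. Thus $gp(M_G) \cap \Hc_T = gp(M_{B_T}) \oplus \{z : \sum z \text{ even}\}$, and the desired equality collapses to $gp(M_{H_T}) = \{z : \sum z \text{ even}\}$. This is vacuous when $T \cup N(G;T) = [\,d\,]$, holds when $H_T$ is connected, and fails when $H_T$ has two or more components (by choosing a vector with odd sum on one component and odd sum on another, producing an element of the right-hand side that is not in the left).

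The chief obstacle I expect is the bipartite lattice computation for $gp(M_{B_T})$: the formula is the obvious guess, but both inclusions require a careful spanning-tree induction. A secondary subtlety is verifying the direct-sum decomposition $M_G \cap \Fc_T = M_{B_T} \oplus M_{H_T}$, which rests on the observation that no non-edge integer point of $\Pc_G$ can lie on $\Fc_T$; this is handled by the elementary fact that $\Pc_G \cap \ZZ^d$ coincides with the set of edge vectors.
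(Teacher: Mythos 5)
Your proposal is correct and follows essentially the same route as the paper: both decompose $gp(M_G\cap\Fc_T)$ into the lattice of the induced bipartite graph on $T\cup N(G;T)$ (computed via a spanning tree) plus the component-wise lattices of $G_{[\,d\,]\setminus(T\cup N(G;T))}$ (computed via Lemma \ref{lemma:gp}), then compare with $gp(M_G)\cap\Hc_T$ by the same parity argument used for Proposition \ref{Boston}. Your added justification that $M_G\cap\Fc_T$ is generated by the edges with $\sigma_T(\rho(e))=0$ (via $\sigma_T\geq 0$ and $\Pc_G\cap\ZZ^d$ being the set of edge vectors) is a welcome, if minor, elaboration of a step the paper states without proof.
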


\begin{proof}
Again, we denote the connected components of $G_{[\,d\,]\setminus (T\cup N(G;T))}$ with $G'_j$. We claim that
\[\label{eq:clai}
gp(M_G \cap \Fc_T) = \bigoplus_j gp(M_{G'_j}) 
\oplus \set{x \in \ZZ^d \with \supp(x) \subset T\cup N(G;T), \sigma_T(x)=0}.
\]
Here, $\supp(.)$ denotes the support of a vector. The sum is direct,
because the supports of the summands are disjoint.
$M_G \cap \Fc_T$ (and thus $gp(M_G \cap \Fc_T)$) is generated by the set $\set{\rho(e)\with e\in E(G), \sigma_T(\rho(e)) = 0}$.
For an edge $e \in E(G)$, it holds that $\sigma_T(\rho(e)) = 0$ if and only if either both endpoints lie in $T\cup N(G;T)$ or both are not contained in this set. Thus, a set of generators of the left side of \eqref{eq:clai} is contained in the right side of the equation, and hence one inclusion follows.
Furthermore $\bigoplus_j gp(M_{G'_j}) \subset gp(M_G \cap \Fc_T)$.
Thus it remains to show that
\[ 
\set{x \in \ZZ^d \with \supp(x) \subset T\cup N(G;T), \sigma_T(x)=0} \subset gp(M_G \cap \Fc_T).
\]
For this we consider a spanning tree of the induced bipartite graph on $T \cup N(G;T)$.
Its edges form a $\ZZ$-basis for the left set, hence it is contained in $gp(M_G \cap \Fc_T)$.
Next, we note that
\begin{align*} gp(M_G) \cap \Hc_T &= \set{x\in \ZZ^d \with \supp(x) \cap (T\cup N(G;T)) = \emptyset, 
\sum x_i \text{ even}} \\ &\oplus \set{x\in \ZZ^d \with \supp(x) \subset (T\cup N(G;T)), \sigma_T(x)=0} \,.
\end{align*}
Now the reasoning is completely analogous to the proof of Proposition \ref{Boston}.
\end{proof}

\end{document}